\renewcommand{\div}{\mbox{div}}
\DeclareFontFamily{OT1}{rsfs}{}
\DeclareFontShape{OT1}{rsfs}{m}{n}{ <-7> rsfs5 <7-10> rsfs7 <10-> rsfs10}{}
\DeclareMathAlphabet{\mathscr}{OT1}{rsfs}{m}{n}
\newcommand{\eq}[1]{\eqref{#1}}
\newcommand{\bel}[1]{\begin{equation}\label{#1}}
\newcommand{\beal}[1]{\begin{eqnarray}\label{#1}}
\newcommand{\beadl}[1]{\begin{deqarr}\label{#1}}
\newcommand{\eeadl}[1]{\arrlabel{#1}\end{deqarr}}
\newcommand{\eeal}[1]{\label{#1}\end{eqnarray}}
\newcommand{\eead}[1]{\end{deqarr}}
\newcommand{\eea}{\end{eqnarray}}
\newcommand{\eeaa}{\end{eqnarray*}}
\newcommand{\be}{\begin{equation}}
\newcommand{\ee}{\end{equation}}
\DeclareFontFamily{OT1}{rsfs}{}
\DeclareFontShape{OT1}{rsfs}{m}{n}{ <-7> rsfs5 <7-10> rsfs7 <10->
rsfs10}{} \DeclareMathAlphabet{\mycal}{OT1}{rsfs}{m}{n}
\newcounter{mnotecount}[section]
\newcommand{\N}{{\Bbb N}}
\newcommand{\rmnote}[1]{}
\newcommand{\Ric}{\operatorname{Ric}}
\def\mysavedown#1{\edef\mysubs{\mysubs#1}}
\def\mysaveup#1{\edef\mysups{\mysups#1}}
\def\mydown#1{{\mytensor}_{\vphantom{\mysubs}#1}}
\def\myup#1{{\mytensor}^{\vphantom{\mysups}#1}}
\def\tensor#1#2{
  #1
  \def\mytensor{\vphantom{#1}}
  \def\mysubs{\relax}
  \def\mysups{\relax}
  \let\down=\mysavedown
  \let\up=\mysaveup
  #2
  \let\down=\mydown
  \let\up=\myup
  #2
  }
\newcommand{\Riem}{\operatorname{Riem}}
\newcommand{\Hess}{\operatorname{Hess}}
\newcommand{\Tr}{\operatorname{Tr}}
\newcommand{\R}{\mathbb R}
\renewcommand{\div}{\operatorname{div}}
\DeclareMathOperator{\Vol}{Vol}
\renewcommand{\phi}{\varphi}
\renewcommand{\epsilon}{\varepsilon}
\def\crn#1#2{{\vcenter{\vbox{
        \hbox{\kern#2pt \vrule width.#2pt height#1pt
           }
          \hrule height.#2pt}}}}
\newcommand{\Ein}{\operatorname{Ein}}
\renewcommand{\hbar}{{\overline h}}
\newcommand{\pre}[2]{{{\vphantom{#2}}^{#1}}\kern-.2ex{#2}}
\theoremstyle{plain}
\newtheorem{theorem}{Théorème}[section]
\newtheorem{lemma}[theorem]{Lemme}
\newtheorem{proposition}[theorem]{Proposition}
\theoremstyle{definition}
\newtheorem{exemple}[theorem]{Exemple}
\newtheorem{remark}[theorem]{Remarque}
\numberwithin{equation}{section}
\begin{document}
\title[Inversion d'opérateurs de courbure]
{Inversion d'opérateurs de courbure au voisinage d'une métrique Ricci parallèle}

\author[E. Delay]{Erwann Delay}
\address{Erwann Delay,
Labo. de Math\'ematiques d'Avignon,
 Fac. des Sciences,
 F84916 Avignon, France}
\date{2 mai 2016}
\email{Erwann.Delay@univ-avignon.fr}
\urladdr{http://www.univ-avignon.fr/fr/recherche/annuaire-chercheurs/\newline$\mbox{ }$
\hspace{3cm} membrestruc/personnel/delay-erwann-1.html}

\begin{abstract}
Soit $(M,g)$  une variété riemannienne compacte sans bord, à courbure de Ricci
 parallèle. 
Nous montrons que certains opérateurs, affines en la courbure de Ricci, sont localement
inversibles, au voisinage de la métrique $g$.
\end{abstract}


\maketitle

\noindent {\bf Mots clefs} : Courbure de Ricci, variété produit, métriques d'Einstein,
2-tenseurs symétriques, EDP elliptique quasi-linéaire.
\\
\newline
{\bf 2010 MSC} : 53C21, 53A45,  58J05, 58J37, 35J62.
\\
\newline

\tableofcontents

\section{Introduction}\label{section:intro}
Sur  une variété Riemannienne $(M,g)$, considérons $\Ric(g)$ sa courbure de Ricci  et $R(g)$ sa courbure scalaire.
Parmi les (champs de) 2-tenseurs symétriques géométriques naturels que l'on peut construire,
les plus simples sont ceux qui seront "affines" en la courbure de Ricci, autrement dit, de la forme
$$
\Ein(g):=\Ric(g)+\kappa R(g)g+\Lambda g,
$$
o\`u $\kappa$ et $\Lambda$ sont des constantes.
Ainsi, si $\kappa=\Lambda=0$ on retrouve la courbure de Ricci, si $\kappa=-\frac12$  le tenseur d'Einstein (avec constante cosmologique $\Lambda$), enfin si $\kappa=-\frac1{2(n-1)}$ et $\Lambda=0$ le 
tenseur de Schouten.
Rappelons que ce tenseur est géométriquement  naturel dans le sens o\`u  pour tout difféomorphisme $\varphi$ assez régulier,
$$
\varphi^*\Ein(g)=\Ein(\varphi^*g).
$$
Nous nous posons ici le problème de l'inversion de l'opérateur $\Ein$.
On se donne donc $E$ un champ de tenseur symétrique sur $M$,  on cherche $g$ métrique riemannienne  telle
\bel{mainequation}
\Ein(g)=E.
\ee
On doit ainsi résoudre un système quasi-linéaire particulièrement complexe.
Le cas de la courbure de Ricci prescrite remonte
aux  années 80. 
DeTurck \cite{{Deturck:ricci}}, en 1981, a tout d'abord montr\'e un r\'esultat
d'existence locale au voisinage d'un point $p$ dans $\R^n$ sous
l'hypoth\`ese (intrins\`eque) que la matrice de  $R(p)$ est
inversible (il a depuis entrepris une longue étude systématique
pour le cadre local, comme le montrent ses  travaux en 1999
\cite{Deturckrank1}).

Puis il y a eu des  r\'esultats {\it globaux} :  DeTurck
\cite{Deturckdim2}, en 1982, a trait\'e le cas tr\`es particulier de la
dimension 2, pour les surfaces {\it compactes}. Il obtient une
condition
 n\'ecessaire et suffisante faisant intervenir la caract\'eristique
d'Euler-Poincar\'e. Hamilton \cite{Hamilton1984}, en 1984, a trait\'e le cas
de la sph\`ere unit\'e de $\R^{n+1}$ (avec $n>2$) en prouvant un
r\'esultat d'inversion locale au voisinage de la m\'etrique
standard.
Nous avions ensuite prouvé un résultat analogue sur l'espace hyperbolique réel \cite{Delay:etude},
et complexe \cite{DelayHerzlich}, au voisinage de la métrique canonique.
Parmis les résultats récents on peut aussi citer les travaux de A. Pulemotov 
comme \cite{Pulemotov:RicciBord}.


Ce type d'inversion locale  a été ensuite adapté à certaines variétes d'Einstein \cite{DeturckEinstein}, \cite{Delay:study}, \cite{Delanoe2003}.

Notons qu'il existe aussi des r\'esultats d'obstruction sur l'inversion de la courbure de
Ricci  \cite{Deturck-Koiso}, \cite{Baldes1986},
 \cite{Hamilton1984}, \cite{Delanoe1991}, \cite{Delay:etude}.

 Afin d'illustrer simplement le type de résultats obtenus ici,  nous en donnerons 
un corollaire  :

\begin{theorem}\label{maintheorem} 
Soit $(M,g)$ une variété riemannienne lisse  dont la courbure $\Ein(g)$ est non dégénérée et parallèle. On suppose que $\kappa=0$ et
que $-2\Lambda$ n'est pas dans le spectre du laplacien de Hodge
agissant sur les 1-formes, ni dans le spetcre du laplacien de Lichnerowicz.
Soient  $k\in\N\backslash\{0\}$ et $\alpha\in(0,1)$.  
Alors pour tout $e\in C^{k+2,\alpha}(M,\mathcal S_2)$ proche de zéro,  il existe un unique $h$ proche de zéro dans 
$C^{k+2,\alpha}(M,\mathcal S_2)$ telle que
$$
\Ein(g+h)=\Ein(g)+e.
$$
De plus l'application $e\mapsto h$ est lisse au voisinage de zéro entre les espaces de Banach correspondants. 
\end{theorem}

Ce théorème est un cas particulier du théorème  \ref{theoinvEin} o\`u $\kappa$ n'est pas forcément nulle et $-2\Lambda$  peut \^etre dans spectre du laplacien de Lichnerowicz,
à condition de modifier l'équation (\ref{mainequation}) par l'ajout d'une projection.

Nous étendons ainsi les résultats antérieurs  sur les variétés compactes d'Einstein, de courbure  positive, au cas  de métriques Ricci parallèles, et de tout types de courbures.
Noter aussi l'apparition d'une condition topologique via le laplacien de Hodge (annulation du premier nombre de Betti si $\kappa=\Lambda=0$).

La régularité de notre solution est optimale, il suffit de transporter l'équation par un difféomorphisme
peu régulier pour s'en convaincre.

Le fait que la métrique de départ soit Ricci parallèle équivaut au fait  qu'elle est localement le produit de métriques d'Einstein
(voir par exemple \cite{Wu:Holonomy}).

Un exemple modèle pour cet article est le produit de deux variétés compactes d'Einstein $(M_1,g_1)$ et $(M_2,g_2)$, ainsi 
$$M=M_1\times M_2\;,\;\;
g=g_1\oplus g_2.$$
Le cas particulier $\kappa=\Lambda=0$, donc de prescription de la courbure de Ricci,
doit servir de fil conducteur.
Dans ce cas le noyau du laplacien de Lichnerowicz $\Delta_L$ est de dimension au moins 2 puisqu'il contient $c_1\;g_1\oplus c_2\;g_2$, $c_1,c_2\in\R$.

Concernant les travaux précédents sur les variétés compactes, o\`u  la dimension du noyau de $\Delta_L$ est supposée \^etre égal à 1,
 on peut remplacer la résolution qui sera  donnée ici "à une constante additive près" par une résolution à "une constante multiplicative près", voir  la remarque \ref{rem:mult}.

Après une étude plus précise de la positivité de $\Delta_L$ et de son  noyau, nous donnons  des exemples de variétés à courbure sectionnelle  positive (au sens large) qui vérifient les hypothèses. 
Par souci pédagogique, nous commencons   par l'inversion de l'opérateur de Ricci 
en section \ref {sec:ppal}.

On traite aussi d'un résultat analogue pour l'opérateur de Ricci contravariant, plus adapté au cas de la courbure négative.

Enfin on étudie la prescription des autres opérateurs de courbure, dont le théorème \ref{maintheorem} est en fait un cas particulier, o\`u  nous donnons
un exemple en courbure nulle.

Finalement, cette dernière étude nous permet de prouver que l'image de certains opérateurs de type Riemann-Christoffel sont
des sous-variétés lisses dans $C^\infty$.

\medskip

{\small\sc Remerciements} : Ce travail est en partie financé par les ANR  SIMI-1-003-01 et ANR-10-BLAN 0105.

\section{Définitions, notations et conventions}\label{sec:def}

Nous noterons  $\nabla$ la connexion  de Levi-Civita de $g$, par $\Ric(g)$   sa courbure de Ricci et par
$\Riem(g)$ sa courbure de  Riemannian sectionelle. 

Soit ${\mathcal T}_p^q$ l'ensemble des tenseurs covariants de rang $p$.
Lorsque $p=2$ et $q=0$, on notera ${\mathcal S}_2$ le sous ensemble des tenseurs symétriques
qui se décompose et ${\mathcal G}\oplus {\mathring{\mathcal
S}_2}$ o\`u ${\mathcal G}$ est l'ensemble des  tenseurs  $g$-conformes et 
${\mathring{\mathcal S}_2}$ l'ensemble des tenseurs sans traces (relativement à $g$). On utilisera la convention de sommation  d'Einstein
 (les indices correspondants vont de $1$ à $n$), et nous utiliserons 
 $g_{ij}$ et son inverse $g^{ij}$ pour monter ou descendre les indices.

Le Laplacian (brut) est définit par
$$
\triangle=-tr\nabla^2=\nabla^*\nabla,
$$
o\`u $\nabla^*$ est l'adjoint formel $L^2$ de $\nabla$. Le Laplacian de
Lichnerowicz  agissant sur les (champs de) 2-tenseurs covariant symétriques est
\bel{laplichne}
\triangle_L=\triangle+2(\Ric-\Riem),
\ee
o\`u
$$(\Ric\; u)_{ij}=\frac{1}{2}[\Ric(g)_{ik}u^k_j+\Ric(g)_{jk}u^k_i],
$$
et
$$
(\Riem \; u)_{ij}=\Riem(g)_{ikjl}u^{kl}.
$$
Pour  $u$ un 2-tenseur covariant symétrique, on définit sa divergence par
 $$ (\mbox{div}u)_i=-\nabla^ju_{ji}.$$ Pour une 1-forme
$\omega$ on $M$, on définit sa divergence par :
$$
d^*\omega=-\nabla^i\omega_i,
$$
et la partie symétrique  de ses dérivées covariantes:
$$
({\mathcal
L}\omega)_{ij}=\frac{1}{2}(\nabla_i\omega_j+\nabla_j\omega_i),$$
(notons que ${\mathcal L}^*=\mbox{div}$).
Le laplacien de Hodge-de Rham agissant sur les 1-formes sera noté
$$
\Delta_H=dd^*+d^*d=\Delta+\Ric.
$$

On définit l'opérateur de Bianchi des 2-tenseurs symétriques dans les 1-formes :
$$
B_g(h)=\div_gh+\frac{1}{2}d(\Tr_gh).
$$

\section{Laplacien de Lichnerowicz et isomorphisme}
Nous commencerons cette section par une autre écriture du Laplacien de
Lichnerowicz. Elle  permet entre autre de voir simplement  que les tenseurs parallèles sont dans son noyau.
On considère l'opérateur de ${\mathcal S}_2$ dans ${\mathcal T}_3$
définit par
$$
(Du)_{kij}:=\frac{1}{\sqrt{2}}(\nabla_ku_{ij}-\nabla_ju_{ik}),
$$
cet opérateur étant , à une constante près, la différentelle extérieure de $u$
vue comme une 1-forme à valeur dans le cotangent
(voir  \cite{Besse} 1.12. p.24).
L'adjoint formel de  $D$  est
$$
(D^*T)_{ij}=\frac{1}{2\sqrt{2}}(-\nabla^kT_{kij}-\nabla^kT_{kji}
+\nabla^kT_{ijk}+\nabla^kT_{jik}).
$$
Ainsi on a 
$$
D^*Du_{ij}=-\nabla^k\nabla_ku_{ij}+\frac{1}{2}(\nabla^k\nabla_iu_{jk}+\nabla^k\nabla_ju_{ik}).
$$
Remarquons d'autre part que
$$
\mathcal L\mathcal L^*u_{ij}=-\frac{1}{2}(\nabla_i\nabla^ku_{jk}+\nabla_j\nabla^ku_{ik}).
$$
et que
$$
\nabla^k\nabla_ju_{ik}-\nabla_j\nabla^ku_{ik}=\Ric(g)_{qj}u^q_i-
   \Riem(g)_{qilj}u^{ql}
$$
On  obtient ainsi la formule de  Weitzenb\"ock  :
$$
\triangle_K:=D^*D+\mathcal L\mathcal L^*=\nabla^*\nabla+\Ric-\Riem.
$$
Par conséquent on a :
\bel{lichneparal}
\Delta_L=2(D^*D+\mathcal L\mathcal L^*)-\nabla^*\nabla
\ee
Si $(M,g)$ est une variété compacte, nous noterons $\Pi$ la projection
orthogonale $L^2$ sur $\ker\Delta_L$. Ainsi, si $h_1,...,h_k$ est une base $L^2$-orthonormée
de $\ker\Delta_L$, 
$$
\Pi(h)=\sum_{i=1}^k\langle h,h_i\rangle_{L^2}h_i.
$$
Nous pouvons énoncer la
\begin{proposition}\label{DeltaLiso}
Soient $k\in\N$ , $\alpha\in(0,1)$ et $c$ un réel non nul. 
L'opérateur 
$\Delta_L+c\Pi$ est un isomorphisme de $C^{k+2,\alpha}(M,\mathcal S_2)$ dans $C^{k,\alpha}(M,\mathcal S_2)$.
\end{proposition}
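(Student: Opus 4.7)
The plan is to exploit that $\Delta_L$ is a second-order elliptic self-adjoint operator on a compact manifold, so standard Fredholm theory applies, and the extra term $c\Pi$ just ``fills in'' the kernel.

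First I would record the structural facts. On a compact manifold $(M,g)$, $\Delta_L$ is an elliptic, formally self-adjoint, second-order differential operator acting on $\mathcal S_2$. By standard elliptic theory it extends to a Fredholm operator of index zero $C^{k+2,\alpha}(M,\mathcal S_2)\to C^{k,\alpha}(M,\mathcal S_2)$, the kernel $K:=\ker\Delta_L$ is finite-dimensional and consists of smooth tensors (elliptic regularity), and $L^2$-self-adjointness gives the orthogonal decomposition
\[
C^{k,\alpha}(M,\mathcal S_2)=K\oplus\bigl(K^\perp\cap C^{k,\alpha}(M,\mathcal S_2)\bigr),
\]
with $\Delta_L$ mapping $K^\perp\cap C^{k+2,\alpha}$ isomorphically onto $K^\perp\cap C^{k,\alpha}$. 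The projector $\Pi$ is continuous on every $C^{j,\beta}$-space since its range is spanned by the finitely many smooth tensors $h_1,\dots,h_k$ and its expression only involves $L^2$-pairings.

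Next, injectivity. Suppose $(\Delta_L+c\Pi)h=0$ and decompose $h=h_0+h_1$ with $h_0=\Pi h\in K$ and $h_1\in K^\perp$. Applying $\Pi$ to the equation, and using that $\Pi\Delta_L=\Delta_L\Pi=0$ (since $\Pi h_i$ are parallel/in the kernel, and $\Delta_L h_1\in K^\perp$ because $\Delta_L$ is self-adjoint), one gets $c\,h_0=0$, hence $h_0=0$. The remaining equation $\Delta_L h_1=0$ with $h_1\in K^\perp\cap K=\{0\}$ gives $h_1=0$.

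Then surjectivity. Given $r\in C^{k,\alpha}(M,\mathcal S_2)$, decompose $r=r_0+r_1$ with $r_0=\Pi r\in K$ and $r_1\in K^\perp\cap C^{k,\alpha}$. Set $h_0:=c^{-1}r_0\in K$, which is smooth. Since $r_1\in K^\perp$ lies in the image of $\Delta_L$ restricted to $K^\perp\cap C^{k+2,\alpha}$, there exists a unique $h_1\in K^\perp\cap C^{k+2,\alpha}$ with $\Delta_L h_1=r_1$. Then $h:=h_0+h_1$ satisfies $(\Delta_L+c\Pi)h=\Delta_L h_1+c h_0=r_1+r_0=r$.

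Continuity of the inverse is automatic by the open mapping theorem once the operator is shown to be a continuous bijection between Banach spaces; alternatively, the Schauder estimate
\[
\|h\|_{C^{k+2,\alpha}}\le C\bigl(\|(\Delta_L+c\Pi)h\|_{C^{k,\alpha}}+\|h\|_{L^2}\bigr)
\]
combined with injectivity yields the bound without the $L^2$ term by the standard compactness argument. The main point to be careful about is only verifying $\Pi\Delta_L=\Delta_L\Pi=0$, which is immediate from self-adjointness and the fact that elements of $K$ are annihilated by $\Delta_L$; everything else is a clean application of Fredholm theory.
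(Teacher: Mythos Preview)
Your argument is correct and follows essentially the same route as the paper: decompose into $K\oplus K^\perp$, observe that $\Delta_L+c\Pi$ acts as $\Delta_L$ on $K^\perp$ and as $c\,\mathrm{Id}$ on $K$, and invoke elliptic regularity. The only cosmetic difference is that the paper first establishes the isomorphism in Sobolev spaces ($H^2\to L^2$) and then transfers to H\"older spaces by regularity, whereas you work directly in $C^{k+2,\alpha}\to C^{k,\alpha}$; both are standard and equivalent here.
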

\begin{proof}
Le preuve est classique, nous en donnerons juste les grandes lignes.
Notons $\mathcal K$ le noyau de dimension finie de $\Delta_L$, ces éléments sont lisses par régularité elliptique.
On note $\mathcal K^\perp$, l'orthogonal $L^2$  de $\mathcal K$.
Alors   
$$\Delta_L: H^2\cap\mathcal K^\perp\longrightarrow \mathcal K^\perp
$$
est un isomorphisme (voir par exemple les théorèmes 31 et 27 pages 463-364 de \cite{Besse}) . 
Ensuite tout élément $h\in H^2$ se décompose en $$h=u^\perp+u\in (H^2\cap\mathcal K^\perp)\oplus\mathcal K.$$
L'application $$h\mapsto \Delta_L(u^\perp)+cu\in K^\perp\oplus\mathcal K$$ est clairement
un isomorphisme, or c'est $\Delta_L+c\Pi$. Il suffit ensuite d'utiliser
la régularité elliptique pour conclure à l'isomorphisme entre les espaces de H\"older.
\end{proof}
Étudions maintenant plus précisément le noyau du Laplacien de {Lichnerowicz}.
\begin{lemma}\label{Bgnul}
$\mbox{ }$

i) Si   $\Ric-\Riem$ est positif sur $L^2$ alors $\ker\Delta_L=\ker\nabla$.

ii) Si   La courbure de Ricci est parallèle et que le premier nombre de Betti est nul
alors $\ker\Delta_L\subset\ker\div.$ 

 Dans les deux cas  nous avons 
$$\ker\Delta_L\subset \ker\div\cap\ker (d\circ \Tr)\subset \ker B_g,$$ en particulier
$$
B_g\circ\Pi=0.
$$
\end{lemma}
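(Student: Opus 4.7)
The plan is to reduce both cases to verifying $\ker\Delta_L \subset \ker\div \cap \ker(d\circ\Tr)$, since the inclusion $\ker\div\cap\ker(d\circ\Tr)\subset \ker B_g$ is immediate from $B_g = \div + \tfrac12 d\Tr$, and $B_g\circ\Pi = 0$ then follows because $\Image\Pi = \ker\Delta_L \subset \ker B_g$. I first dispatch the trace part simultaneously for (i) and (ii). A direct contraction shows that $\Tr(\Ric\cdot u) = \Ric_{ij}u^{ij} = \Tr(\Riem\cdot u)$, so the zeroth-order part of $\Delta_L$ is tracefree; since $\Tr$ commutes with the rough Laplacian, one obtains $\Tr\Delta_L u = \Delta\Tr u$. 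Hence for $u\in\ker\Delta_L$ on the compact manifold $M$, the function $\Tr u$ is harmonic, thus constant, and $d\Tr u=0$.

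For part (i), I pair $\Delta_L u = 0$ with $u$ in $L^2$ using the expression $\Delta_L = \nabla^*\nabla + 2(\Ric - \Riem)$ from \eqref{laplichne}, obtaining
$$\|\nabla u\|^2 + 2\langle (\Ric-\Riem)u,u\rangle = 0.$$
Since both terms are non-negative under the hypothesis, each vanishes, so $\nabla u = 0$ and $\ker\Delta_L\subset\ker\nabla$. Conversely, any parallel $u$ satisfies $Du=0$ and $\mathcal L^* u = \div u = 0$, so the Weitzenböck identity $D^*D + \mathcal L\mathcal L^* = \nabla^*\nabla + \Ric - \Riem$ gives $(\Ric-\Riem)u = 0$ and hence $\Delta_L u = 0$. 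Parallel tensors obviously have $\div u = 0$.

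For part (ii), the heart of the argument is the commutation $\div\circ\Delta_L = \Delta_H\circ\div$ under $\nabla\Ric = 0$. Applying $-\nabla^j$ to $(\Delta_L u)_{ji}$, two simplifications occur: first, $\nabla\Ric = 0$ strips the outer derivative off both Ricci terms; second, the contracted second Bianchi identity yields $\nabla^j\Riem_{jkil} = \nabla_i\Ric_{kl} - \nabla_l\Ric_{ki} = 0$. What remains is $\nabla^j\nabla^k\nabla_k u_{ji}$ together with algebraic $\Ric\cdot\nabla u$ and $\Riem\cdot\nabla u$ pieces; commuting $\nabla^j$ past $\nabla^k\nabla_k$ generates further curvature terms via $[\nabla_a,\nabla_b]\nabla u$, and these must recombine with the algebraic pieces to reconstitute exactly $\nabla^*\nabla(\div u) + \Ric(\div u) = \Delta_H(\div u)$. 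This curvature bookkeeping is the main technical step. Consequently, $\div u$ is a harmonic 1-form whenever $\Delta_L u = 0$, and the hypothesis $b_1(M)=0$ forces $\div u = 0$, completing the proof.
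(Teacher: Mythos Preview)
Your proof is correct and follows essentially the same route as the paper: the identity $\Tr\circ\Delta_L=\Delta\circ\Tr$ for the trace part, the $L^2$ pairing against \eqref{laplichne} for (i), and the commutation $\div\circ\Delta_L=\Delta_H\circ\div$ combined with $b_1=0$ for (ii). The only cosmetic differences are that the paper obtains $\ker\nabla\subset\ker\Delta_L$ directly from \eqref{lichneparal} rather than via the Weitzenb\"ock detour, and cites \cite{Lichnerowicz:prop} for the commutation identity instead of sketching the curvature bookkeeping.
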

\begin{proof}
Par l'écriture \eq{lichneparal}, il est clair qu'on a toujours $\ker\nabla\subset\ker\Delta_L$.
Pour l'autre inclusion, il suffit revenir à la définition de $\Delta_L$ \eq{laplichne}.
Enfin si la courbure de Ricci est parallèle,  par \cite{Lichnerowicz:prop} on a 
\bel{divDeltaL}
\Delta_H\circ\div=\div\circ\Delta_L.
\ee
Pour la dernière inclusion rappelons juste que $\Delta_L$ respecte
la décomposition $S_2=\mathcal G\oplus\mathring S_2$ avec 
\bel{TrDeltaL}
\Tr\circ\Delta_L=\Delta\circ\Tr.
\ee
\end{proof}

Afin de connaître aussi   l'éventuelle  positivité de $\Delta_L$, étudions celle de
 $\Ric-\Riem$. Nous donnons pour cela un  lemme algébrique.
\begin{lemma}\label{lem:fujitani}
Soit un point $x$ de $M$, on note $\Ric_{\min}$ la plus petite valeurs propre de  $\Ric(g)$ en $x$, 
$K_{\max}$ et $K_{\min}$  le max et le min de la courbure sectionnelle en $x$. Alors pour tout $h\in \mathring S_2$  en $x$, on a l'inégalité ponctuelle:
$$
\langle (\Ric-\Riem) h,h\rangle_{g_x}\geq\max\{2\Ric_{\min}-(n-2)K_{\max},nK_{\min}\}\|h\|^2_{g_x}.
$$
\end{lemma}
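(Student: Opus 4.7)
En un point $x\in M$, je choisis une base orthonormée $\{e_i\}_{i=1}^n$ de $T_xM$ qui diagonalise le tenseur symétrique $h$, d'où l'existence de $\lambda_1,\dots,\lambda_n\in\R$ tels que $h_{ij}=\lambda_i\delta_{ij}$ dans cette base. Comme $h$ est sans trace, $\sum_i\lambda_i=0$ et $\|h\|_{g_x}^2=\sum_i\lambda_i^2$. Un calcul direct à partir des définitions de $\Ric\cdot h$ et $\Riem\cdot h$ rappelées au début de la section donne alors
$$
\langle \Ric\cdot h,h\rangle_{g_x}=\sum_i \Ric_{ii}\,\lambda_i^2,\qquad \langle \Riem\cdot h,h\rangle_{g_x}=\sum_{i\neq k} K_{ik}\,\lambda_i\lambda_k,
$$
où $K_{ik}$ désigne la courbure sectionnelle du plan engendré par $e_i$ et $e_k$ (on convient que $K_{ii}=0$).

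En utilisant l'identité classique $\Ric_{ii}=\sum_{k\neq i} K_{ik}$ (valable dans toute base orthonormée) et en symétrisant les rôles de $i$ et $k$ grâce à la relation $(\lambda_i^2-\lambda_i\lambda_k)+(\lambda_k^2-\lambda_i\lambda_k)=(\lambda_i-\lambda_k)^2$, j'obtiens l'identité clef
$$
\langle (\Ric-\Riem)\cdot h,h\rangle_{g_x}=\frac12\sum_{i\neq k}K_{ik}(\lambda_i-\lambda_k)^2.
$$
C'est à partir de cette forme symétrique que les deux minorations seront déduites, selon que l'on emploie la borne inférieure ou la borne supérieure sur $K_{ik}$.

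Pour la première minoration, je minore ponctuellement $K_{ik}\geq K_{\min}$ et j'utilise la contrainte de trace nulle pour calculer
$$
\sum_{i,k}(\lambda_i-\lambda_k)^2=2n\|h\|_{g_x}^2-2\Big(\sum_i\lambda_i\Big)^2=2n\|h\|_{g_x}^2,
$$
ce qui donne immédiatement $\langle (\Ric-\Riem)\cdot h,h\rangle_{g_x}\geq nK_{\min}\|h\|_{g_x}^2$.

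Pour la seconde, je décompose $K_{ik}=K_{\max}-(K_{\max}-K_{ik})$, le terme soustractif étant positif. La contribution de $K_{\max}$ seule fournit $nK_{\max}\|h\|_{g_x}^2$. Pour majorer le reste, j'emploie $(\lambda_i-\lambda_k)^2\leq 2(\lambda_i^2+\lambda_k^2)$ et l'identité $\sum_{k\neq i}(K_{\max}-K_{ik})=(n-1)K_{\max}-\Ric_{ii}$; après regroupement, les termes en $K_{\max}$ se combinent en $nK_{\max}-2(n-1)K_{\max}=-(n-2)K_{\max}$ et la minoration $\Ric_{ii}\geq \Ric_{\min}$ fournit
$$
\langle (\Ric-\Riem)\cdot h,h\rangle_{g_x}\geq[2\Ric_{\min}-(n-2)K_{\max}]\|h\|_{g_x}^2.
$$
Les deux minorations étant simultanément vraies, il en résulte en particulier la borne par le minimum des deux, ce qui est la conclusion annoncée. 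Le seul point un peu délicat est le traitement algébrique de la seconde minoration, où il faut combiner trois termes sans perdre de facteur, et où l'utilisation de $\Ric_{ii}\geq\Ric_{\min}$ doit intervenir après la recombinaison des coefficients de $K_{\max}$ plutôt qu'avant.
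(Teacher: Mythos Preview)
Ta preuve est correcte et constitue une alternative distincte de celle du papier. Le papier procède par tenseurs propres de $\Ric-\Riem$ : il fixe un tenseur propre $h$ de valeur propre $a$, le diagonalise en base orthonormée avec $|\lambda_1|=\max_i|\lambda_i|$, puis lit l'inégalité sur la seule composante $[(\Ric-\Riem)h]_{11}=a\lambda_1$, en ajoutant et soustrayant $K_{\max}$ (resp. $K_{\min}$) dans $\sum_{i\neq 1}R_{i1i1}\lambda_i$ et en utilisant $|\lambda_i|\leq\lambda_1$. Toi, tu établis d'abord l'identité symétrique
\[
\langle (\Ric-\Riem)h,h\rangle_{g_x}=\tfrac12\sum_{i\neq k}K_{ik}(\lambda_i-\lambda_k)^2,
\]
valable pour tout $h$ sans trace, puis tu en déduis les deux minorations directement : la borne en $K_{\min}$ via $\sum_{i,k}(\lambda_i-\lambda_k)^2=2n\|h\|^2$, et celle en $2\Ric_{\min}-(n-2)K_{\max}$ via $(\lambda_i-\lambda_k)^2\leq 2(\lambda_i^2+\lambda_k^2)$ combinée à $\sum_{k\neq i}(K_{\max}-K_{ik})=(n-1)K_{\max}-\Ric_{ii}$. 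Ton approche a l'avantage d'être intrinsèquement symétrique et de traiter tous les $h\in\mathring S_2$ d'un coup sans passer par la décomposition spectrale de $\Ric-\Riem$ ; elle évite aussi la discussion implicite du signe de $\lambda_1$. La méthode du papier, plus proche de l'argument originel de Fujitani, est en revanche un peu plus courte et isole clairement le cas extrémal.
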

\begin{proof}
La preuve est inspirée du lemme de Fujitani (\cite{Besse} p.356) (voir 	aussi dans \cite{Deturck-Koiso} la  preuve du corollaire 3.4 p 356) mais adaptée au cas non forcément Einstein.
Choisissons $h\in \mathring S_2$ un tenseur propre de $\Ric-\Riem$. Prenons  une base orthonormée o\`u $h$ est diagonale, de valeurs propres 
$\lambda_1,...,\lambda_n$ avec $\lambda_1=\sup|\lambda_i|$ (et $\sum \lambda_i=0$).
On pose 
$$
a=\frac{\langle (\Ric-\Riem) h,h\rangle_{g_x}}{\|h\|^2_{g_x}}.
$$
On a 
$$
\begin{array}{lll}
a\lambda_1&=&[(\Ric-\Riem) h]_{11}=\sum_lR_{1l}h_{1l}-\sum_{i,k}R_{i1k1}h_{ik}\\
&=&R_{11}\lambda_1-\sum_{i\neq1}R_{i1i1}\lambda_i\\
&=&R_{11}\lambda_1-\sum_{i\neq1}K_{\max}\lambda_i+\sum_{i\neq1}(K_{\max}-R_{i1i1})\lambda_i\\
&=&R_{11}\lambda_1+K_{\max}\lambda_1+\sum_{i\neq1}(K_{\max}-R_{i1i1})\lambda_i\\
&\geq&R_{11}\lambda_1+K_{\max}\lambda_1-\sum_{i\neq1}(K_{\max}-R_{i1i1})\lambda_1\\
&=&2R_{11}\lambda_1-(n-2)K_{\max}\lambda_1\\
&\geq&(2\Ric_{\min}-(n-2)K_{\max})\lambda_1\;.\\
\end{array}
$$
Le même type de raisonnement donne 
$$
\begin{array}{lll}
a\lambda_1
&=&R_{11}\lambda_1-\sum_{i\neq1}K_{\min}\lambda_i-\sum_{i\neq1}(R_{i1i1}-K_{\min})\lambda_i\\
&\geq&R_{11}\lambda_1+K_{\min}\lambda_1-\sum_{i\neq1}(R_{i1i1}-K_{\min})\lambda_1\\
&=&nK_{\min}\lambda_1\;.\\
\end{array}
$$

\end{proof}

\section{Cas de la courbure de Ricci}\label{sec:ppal}
Il est maintenant bien connu que l'équation de Ricci n'est pas elliptique du à l'invariance
de la courbure par difféomorphisme. Nous allons modifier cette  équation en s'inspirant
de la m\'ethode de DeTurck. On  y ajoute donc un terme jauge de telle sorte
que le cette nouvelle equation devienne elliptique tout en faisant en sorte
que ses solutions soient solution de l'équation de Ricci.
Nous devons aussi ici prendre en compte le fait que le Laplacien de Lichnerowicz
peut avoir un noyau de dimension plus grande que 1, contrairement
aux travaux précédents.
Afin de construire notre nouvelle équation, rappelons quelques différentielles d'opérateurs.\\

Nous avons déjà (voir \cite{Besse} par exemple )
$$
D\Ric(g)h=\frac12\Delta_Lh-\mathcal L_gB_g(h).
$$
Le linéarisé en la première variable de l'opérateur de Bianchi est (voir par exemple \cite{Delay:study})
$$
[D B_{(.)}(R)](g)h=-{R}B_g(h)+T(g,R)h,
$$
o\`u ${R}$ est identifié ici à l'endomorphisme de $T^*M$ correspondant et
$$
[T(g,R)h]_j=T(g,R)^{kl}_j h_{kl}=\frac{1}{2}(\nabla^k R^l_j+\nabla^l
  R^k_j-\nabla_jR^{kl})
h_{kl}
$$
En particulier, si $g$ est Ricci paralllèle on a 
$$
[D B_{(.)}(\Ric(g))](g)h=-{\Ric_g}B_g(h).
$$
o\`u est $\Ric_g$ l'endomorphisme de $T^*M$ donné par $(\omega_i)\mapsto(\Ric(g)^k_i\omega_k)$.

Rappelons enfin que pour toute métrique $g$, $B_{g}(\Ric(g))=0$
par l'identité de Bianchi.

L'équation que nous choisissons de résoudre sera

\bel{equaelliptique}
F(h,r):=\Ric(g+h)-R(h,r)-{\mathcal L}_g\{\Ric_g^{-1}B_{g+h}[R(h,r)]\}=0,
\ee
o\`u 
$$
R(h,r)=Ric(g)+r-\frac12\Pi(h),
$$
 et $\Pi$ est la projection orthogonale $L^2$ sur ker$\Delta_L^g$.

Commen\c{c}ons par vérifier que les solutions de la nouvelle equation sont solutions de
l'équation qui nous intéresse.
\begin{proposition}\label{sol}
Sous les conditions du théorème \ref{maintheorem}, avec $\kappa=\Lambda=0$,  si
  $h\in C^{k+2,\alpha}(M,\mathcal S_2)$ est assez petit, et que la métrique
$g+h$ est solution de \eq{equaelliptique}, alors c'est une solution de 
$$
\Ric(g+h)=R(h,r).
$$
\end{proposition}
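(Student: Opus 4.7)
Le plan est d'appliquer $B_{g+h}$ aux deux membres de \eqref{equaelliptique} pour exploiter l'identité de Bianchi contractée $B_{g+h}\Ric(g+h)=0$. En posant $\omega:=\Ric_g^{-1}B_{g+h}[R(h,r)]$, de sorte que $B_{g+h}R(h,r)=\Ric_g\omega$, l'équation $B_{g+h}F(h,r)=0$ se réduit à une équation linéaire en $\omega$:
$$
\mathcal{P}_h\,\omega := \bigl(\Ric_g+B_{g+h}\mathcal{L}_g\bigr)\omega=0.
$$
Il suffira alors de montrer que $\mathcal{P}_h$ est injectif (en fait un isomorphisme entre espaces de Hölder) pour $h$ assez petit: $\omega=0$ entraîne l'annulation du terme de jauge $\mathcal{L}_g\omega$ dans \eqref{equaelliptique}, et donc la conclusion $\Ric(g+h)=R(h,r)$.

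L'étude de $\mathcal{P}_0$ sur les 1-formes repose sur un calcul de type Weitzenb\"ock. En utilisant $(\mathcal{L}_g\omega)_{ij}=\frac{1}{2}(\nabla_i\omega_j+\nabla_j\omega_i)$ et $\Tr_g\mathcal{L}_g\omega=\nabla^i\omega_i=-d^*\omega$, et en commutant les dérivées covariantes dans $-\frac{1}{2}\nabla^j\nabla_i\omega_j$ (ce qui fait apparaître un terme de Ricci via $[\nabla^j,\nabla_i]\omega_j=(\Ric_g\omega)_i$, à rapprocher de la formule $\Delta_H=\Delta+\Ric$ rappelée dans la section \ref{sec:def}), on obtient
$$
B_g\mathcal{L}_g=\tfrac12(\Delta-\Ric_g), \qquad \text{d'où} \qquad \mathcal{P}_0=\tfrac12(\Delta+\Ric_g)=\tfrac12\Delta_H,
$$
la moitié du laplacien de Hodge-de Rham agissant sur les 1-formes. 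L'opérateur $\mathcal{P}_0$ est donc elliptique, et l'hypothèse $b_1(M)=0$ garantit que son noyau, constitué des 1-formes harmoniques, est réduit à $\{0\}$. Par la théorie de Fredholm et la régularité elliptique, $\mathcal{P}_0$ est un isomorphisme de $C^{k+2,\alpha}(M,T^*M)$ sur $C^{k,\alpha}(M,T^*M)$.

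Comme les coefficients de l'opérateur différentiel $\mathcal{P}_h$ dépendent continûment de $h\in C^{k+2,\alpha}(M,\mathcal{S}_2)$ (le terme $B_{g+h}$ fait intervenir $g+h$ et ses dérivées premières), et que l'ensemble des isomorphismes est ouvert dans les opérateurs bornés, $\mathcal{P}_h$ reste un isomorphisme pour $h$ suffisamment petit, d'où $\omega=0$ et la conclusion. Le seul point technique non routinier est le calcul explicite de $B_g\mathcal{L}_g$ ci-dessus; l'hypothèse de non-dégénérescence de $\Ric(g)$ est quant à elle nécessaire à la définition même du terme de jauge $\mathcal{L}_g\{\Ric_g^{-1}B_{g+h}[\cdot]\}$.
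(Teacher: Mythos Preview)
Your proof is correct and follows exactly the same route as the paper: apply $B_{g+h}$ to \eqref{equaelliptique}, use the Bianchi identity to kill the Ricci term, set $\omega=\Ric_g^{-1}B_{g+h}[R(h,r)]$, identify the resulting operator at $h=0$ as $\tfrac12\Delta_H$ via the commutation computation, and conclude by $b_1(M)=0$ together with a perturbation argument. The only (harmless) imprecision is the regularity bookkeeping: since $\omega$ involves one derivative of $R(h,r)\in C^{k+2,\alpha}$, the relevant isomorphism is $C^{k+1,\alpha}\to C^{k-1,\alpha}$ rather than $C^{k+2,\alpha}\to C^{k,\alpha}$, but only injectivity is needed and your argument delivers it.
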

\begin{proof}
On applique $B_{g+h}$ à l'équation \eq{equaelliptique}. Remarquons que $B_{g+h}[\Ric(g+h)]=0$ par l'identité de Bianchi. Ainsi, si on pose 
$$\omega:=\Ric_g^{-1}B_{g+h}(R(h,r)),$$ on obtient
$$
P_{g+h}\omega:=B_{g+h}[{\mathcal L}_g(\omega)]+\Ric_g\omega=0.
$$
L'opérateur $P_g$ se lit en coordonnées locales:
$$
(P_g\omega)_j=-\nabla^{i}\left[\frac{1}{2}(\nabla_i\omega_j+\nabla_j\omega_i)\right]
+\frac{1}{2}\nabla_j\nabla^i\omega_i+\Ric(g)_{j}^k\omega_k.
$$
Commutons les dérivées et  multiplions par  $2$, on obtient 
$$2P_g=\Delta_g\omega+\Ric_g\omega=\Delta_H \omega.$$
Comme le premier nombre Betti est nul ($b_1=0$) l'opérateur $P_g$ a un noyau $L^2$ trivial (et c'est un isomorphisme de
$C^{k+1,\alpha}(M,\mathcal T_1)$ dans $C^{k-1,\alpha}(M,\mathcal T_1)$). Maintenant si 
 $h$ est petit dans $C^{k+2,\alpha}(M,\mathcal S_2)$, l'opérateur $P_{g+h}$ reste injectif. On peut donc conclure que $\omega=0$.
\end{proof}

\begin{remark}
Le fait que $B_{g+h}[R(h,r)]$ s'annule prouve que l'application identité de $(M,g+h)$ dans
$(M,R(h,r))$ est harmonique (voir \cite{GL} par exemple).
\end{remark}

Nous allons maintenant construire les solutions de \eq{equaelliptique} par un argument de fonctions implicites 
dans des espaces de Banach.
\begin{proposition}
Sous les conditions du théorème \ref{maintheorem}, avec $\kappa=\Lambda=0$.
Pour tout $r\in C^{k+2,\alpha}(M,\mathcal S_2)$ petit, il existe un unique 
$h$ proche de zéro dans  $ C^{k+2,\alpha}(M,\mathcal S_2)$ solution de \eq{equaelliptique}.
\end{proposition}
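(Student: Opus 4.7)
Le plan consiste à appliquer le théorème des fonctions implicites à l'application $F$ définie en \eq{equaelliptique} vue comme une application entre espaces de Banach. Tout d'abord, on vérifie que $F$ définit une application lisse
$$
F : U \times V \longrightarrow C^{k,\alpha}(M,\mathcal S_2),
$$
où $U$ et $V$ sont des voisinages de $0$ dans $C^{k+2,\alpha}(M,\mathcal S_2)$ : en effet, $\Ric(g+h)$ fait perdre deux dérivées, tandis que $B_{g+h}$ puis $\mathcal L_g$ font chacun perdre une dérivée, ce qui rend les comptes cohérents. On observe ensuite que $F(0,0)=0$, grâce à $R(0,0)=\Ric(g)$ et à l'identité de Bianchi $B_g(\Ric(g))=0$.

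L'étape cruciale est le calcul du linéarisé $D_h F(0,0)$. En utilisant la formule classique pour la linéarisation de Ricci, on a
$$
D_h\Ric(g+h)|_{h=0}\cdot k = \tfrac12 \Delta_L k - \mathcal L_g B_g(k),
$$
et trivialement $D_h R(h,r)|_{(0,0)} \cdot k = -\tfrac12 \Pi(k)$. Pour le terme de jauge, on utilise la formule rappelée dans le texte : puisque $g$ est Ricci parallèle,
$$
D_h\bigl[B_{g+h}(\Ric(g))\bigr]|_{h=0}\cdot k = -\Ric_g\, B_g(k),
$$
et comme $\Pi(k)\in\ker B_g$ par le lemme \ref{Bgnul}, la dérivation en $h$ du terme $-\tfrac12\Pi(h)$ à l'intérieur de $B_{g+h}$ ne contribue pas. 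Après application de $\mathcal L_g\circ \Ric_g^{-1}$, cela fournit $-\mathcal L_g B_g(k)$. En sommant les contributions, les termes $\pm\mathcal L_g B_g(k)$ se compensent et il reste
$$
D_h F(0,0)\cdot k = \tfrac12 \Delta_L k + \tfrac12 \Pi(k) = \tfrac12 \bigl(\Delta_L + \Pi\bigr) k.
$$

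Par la proposition \ref{DeltaLiso} appliquée avec $c=1$, l'opérateur $\Delta_L+\Pi$ est un isomorphisme de $C^{k+2,\alpha}(M,\mathcal S_2)$ sur $C^{k,\alpha}(M,\mathcal S_2)$. Le théorème des fonctions implicites entre espaces de Banach fournit alors, pour tout $r$ assez petit dans $C^{k+2,\alpha}(M,\mathcal S_2)$, un unique $h$ proche de zéro tel que $F(h,r)=0$, avec dépendance lisse de $h$ en $r$.

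Le principal obstacle technique est le calcul soigneux du linéarisé : il faut vérifier la compensation exacte des termes de Bianchi, ce qui repose de manière essentielle sur deux points, l'hypothèse Ricci parallèle (qui fait disparaître le terme $T(g,R)$ dans la linéarisation de $B$) et l'inclusion $\ker\Delta_L\subset \ker B_g$ du lemme \ref{Bgnul} (qui annule la contribution de la perturbation $\Pi(h)$ dans le terme de jauge). C'est précisément cette combinaison qui rend l'opérateur linéarisé égal à $\tfrac12(\Delta_L+\Pi)$ et donc inversible.
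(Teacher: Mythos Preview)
Your proof is correct and follows exactly the same approach as the paper: apply the implicit function theorem to $F$ between the stated H\"older spaces, check $F(0,0)=0$, compute $D_hF(0,0)=\tfrac12(\Delta_L+\Pi)$ using the Ricci-parallel hypothesis and Lemma~\ref{Bgnul}, then invoke Proposition~\ref{DeltaLiso}. Your write-up simply spells out in detail the cancellation of the $\mathcal L_gB_g(k)$ terms that the paper leaves implicit.
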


\begin{proof}
On considère $F$ comme application définie au voisinage de zéro
dans $C^{k+2,\alpha}(M,\mathcal S_2)\times C^{k+2,\alpha}(M,\mathcal S_2) $ à valeur dans 
$C^{k,\alpha}(M,\mathcal S_2)$.  On a déjà $F(0,0)=0$. Compte tenue des différentielles des opérateurs données  en début de section et du lemme \ref{Bgnul}, la différentielle relativement à $h$ en $0$ est
$$
D_{h}F(0,0)=\frac12\Delta_L+\frac12\Pi.
$$
Par la propositon \ref{DeltaLiso}, cet opérateur est un isomorphisme de $C^{k+2,\alpha}(M,\mathcal S_2)$ dans $C^{k,\alpha}(M,\mathcal S_2)$.
Le théorème des fonctions implicites permet de conclure.

\end{proof}

\begin{remark}\label{rem:mult}
$R(h,r)$ peut etre remplacée par un 2-tenseur tel que $R(0,r)=\Ric(g)+r$ et $D_hR(0,0)=c\Pi$, $c\neq 0$. Par exemple dans \cite{Delanoe2003} o\`u le noyau de $\Delta_L$
est réduit aux multiples de $g$,  on a 
$$
\Pi(h)=\frac1{n\Vol_g(M)}(\int_M\Tr_ghd\mu_g) g=:\frac1n<\Tr_gh>g,
$$ 
et comme $\Ric(g)=g$, on peut prendre $$R(h,r)=e^{\frac1n<\Tr_gh>}(\Ric(g)+r).$$

\end{remark}

\begin{exemple}
Rappelons tout d'abord qu'une métrique Ricci parallèle est forcément
localement le produit de variétés d'Einstein (voir par exemple \cite{Wu:Holonomy}).
En particulier si $g$ est un produit de métriques d'Einstein à courbures scalaires strictement positives,  alors  comme $\Ric(g)>0$,  le premier 
nombre Betti est nul et le théorème \ref{maintheorem} s'applique. Si de plus les courbures  sectionnelles sont  positives (ou nulles), alors par  le lemme \ref{lem:fujitani}, $ \Ric-\Riem\geq 0$ et
le noyau de $\Delta_L$ est réduit aux 2-tenseurs symétriques parallèles. 
\end{exemple}


\section{Opérateur de Ricci contravariant}
On s'intéresse ici à l'inversion de l'opérateur de Ricci
contravariant :$$g\mapsto \overline\Ric(g)$$
 dont les composantes  en coordonnées locales sont $\overline\Ric(g)^{ij}=g^{ik}g^{jl}\Ric(g)_{kl}$. Nous utiliserons la notation évidente 
$$\overline\Ric(g)=g^{-1}\Ric(g)g^{-1}.$$
Nous adaptons les étapes de la section \ref{sec:ppal}. Tout d'abord on a 
$$
D\overline\Ric(g)h=g^{-1}[\frac12\Delta_Lh-\mathcal L_g B_g(h)-2\Ric h]g^{-1}.
$$
Posons $\overline B_g(\overline R)=B_{g}(g\overline Rg)$, ainsi si
 $\nabla\overline  R=0$, on obtient
$$
D\overline B_{(.)}(\overline R)=-g\overline RB_g(h)+B_g(h\overline Rg+g\overline R h).
$$
Si de plus $\overline R=\lambda g^{-1}$, on trouve
$$
D\overline B_{(.)}(\overline R)h=\lambda B_g(h)=g\overline RB_g(h).
$$
L'équation avec jauge que nous choisissons de résoudre ici sera 
\bel{equaelliptiquecontra}
\overline F(h,\overline r):=g[\overline\Ric(g+h)-\overline R(h,\overline r)]g+{\mathcal L}_g\{\Ric_g^{-1}\overline B_{g+h}[\overline R(h,\overline r)]\}=0,
\ee
o\`u 
$$
\overline R(h,\overline r)=\overline {\Ric}(g)+\overline r-\frac12g^{-1}\overline \Pi(h)g^{-1},
$$
 et $\overline \Pi$ est la projection orthogonale $L^2$ sur ker$(\Delta_L^g-4\Ric)$.

\begin{theorem}\label{theoremRicContra}
Soient  $k\in\N\backslash\{0\}$ et $\alpha\in(0,1)$. Soit $g$ une métrique d'Einstein à courbure scalaire non nulle.
On suppose que
le noyau de  l'opérateur $\Delta_H-4\Ric_g$ est trivial, ainsi que celui de $\Delta-4\frac{R(g)}n$ agissant sur les fonctions .  Alors pour tout $\overline r\in C^{k+2,\alpha}(M,\mathcal S^2)$ proche de zéro,  il existe un unique $h$ proche de zéro dans 
$C^{k+2,\alpha}(M,\mathcal S_2)$ telle que
$$
\overline \Ric(g+h)=\overline\Ric(g)+\overline r-\frac12g^{-1}\overline\Pi(h)g^{-1}.
$$
De plus l'application $\overline r\mapsto h$ est lisse au voisinage de zéro entre les Banach correspondants. 
\end{theorem}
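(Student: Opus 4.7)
La stratégie suit ligne par ligne celle de la section \ref{sec:ppal}, en remplaçant le couple $(\Delta_L,\Pi)$ par $(\Delta_L-4\Ric,\overline\Pi)$ et en utilisant les différentielles de $\overline\Ric$ et $\overline B$ fournies en début de section. On applique le théorème des fonctions implicites à $\overline F$ définie par \eqref{equaelliptiquecontra}, puis on récupère $\overline\Ric(g+h)=\overline R(h,\overline r)$ par un argument de jauge de type Bianchi adapté.

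Le premier point est le calcul de $D_h\overline F(0,0)$. Comme $g$ est d'Einstein, $\overline\Ric(g)=\lambda g^{-1}$ avec $\lambda=R(g)/n$, et $\nabla\overline\Ric(g)=0$, de sorte que la formule donnée dans le texte fournit $D\overline B_{(\cdot)}(\overline\Ric(g))h=\lambda B_g(h)$ ; par ailleurs $D_h\overline R(0,0)h=-\tfrac12 g^{-1}\overline\Pi(h)g^{-1}$. Les termes $\mathcal L_g B_g(h)$ se compensent exactement comme dans la section \ref{sec:ppal}, et on obtient
$$D_h\overline F(0,0)h=\tfrac12\bigl[(\Delta_L-4\Ric)h+\overline\Pi(h)-\mathcal L_g\{\Ric_g^{-1}B_g[\overline\Pi(h)]\}\bigr].$$
Le dernier terme disparaît grâce à l'analogue du lemme \ref{Bgnul} : tout $\phi\in\ker(\Delta_L-4\Ric)$ appartient à $\ker B_g$. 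En effet, la métrique étant Einstein donc Ricci parallèle, les relations \eqref{divDeltaL} et \eqref{TrDeltaL} combinées à $\Ric\phi=\lambda\phi$ sur $\mathcal S_2$ donnent $(\Delta_H-4\lambda)\div\phi=0$ et $(\Delta-4R(g)/n)\Tr\phi=0$. Les deux hypothèses du théorème forcent alors $\div\phi=0$ et $\Tr\phi=0$, d'où $B_g\phi=0$ puis $B_g\circ\overline\Pi=0$.

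La proposition \ref{DeltaLiso} s'adapte sans changement à $(\Delta_L-4\Ric)+c\overline\Pi$, par décomposition de $\mathcal S_2$ selon $\ker(\Delta_L-4\Ric)\oplus\ker(\Delta_L-4\Ric)^\perp$, auto-adjonction de l'opérateur et régularité elliptique. Ceci montre que $D_h\overline F(0,0)=\tfrac12[(\Delta_L-4\Ric)+\overline\Pi]$ est un isomorphisme $C^{k+2,\alpha}(M,\mathcal S_2)\to C^{k,\alpha}(M,\mathcal S_2)$, et le théorème des fonctions implicites produit la famille lisse $\overline r\mapsto h$ cherchée.

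Il reste à déduire $\overline\Ric(g+h)=\overline R(h,\overline r)$. On pose $\omega:=\Ric_g^{-1}\overline B_{g+h}[\overline R(h,\overline r)]$ et on récrit l'équation sous la forme $\overline\Ric(g+h)-\overline R+g^{-1}\mathcal L_g\omega\,g^{-1}=0$, à laquelle on applique $\overline B_{g+h}$. L'identité de Bianchi donne $\overline B_{g+h}[\overline\Ric(g+h)]=B_{g+h}[\Ric(g+h)]=0$, d'où l'équation linéaire elliptique en $\omega$
$$\overline B_{g+h}[g^{-1}\mathcal L_g\omega\,g^{-1}]-\Ric_g\omega=0.$$
En $h=0$, elle se réduit, par le même calcul qu'à la proposition \ref{sol} (où $2P_g=\Delta_H$), à $\tfrac12(\Delta_H-4\Ric_g)\omega=0$, dont l'injectivité est précisément une des hypothèses du théorème. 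Cette injectivité se propage par perturbation à $h$ petit, d'où $\omega=0$ et donc $\overline\Ric(g+h)=\overline R(h,\overline r)$. L'obstacle principal est précisément cette dernière étape : il faut vérifier soigneusement que les facteurs $(g+h)^{-1}g$ introduits par le choix de la jauge ne produisent que des termes d'ordre inférieur, afin d'assurer que l'opérateur limite en $h=0$ soit bien $\tfrac12(\Delta_H-4\Ric_g)$ et non une quantité déformée.
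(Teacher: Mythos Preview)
Your argument is correct and follows the paper's proof essentially line by line: the computation of $D_h\overline F(0,0)=\tfrac12[(\Delta_L-4\Ric)+\overline\Pi]$, the verification that $\ker(\Delta_L-4\Ric)$ consists of TT-tensors via \eqref{divDeltaL}--\eqref{TrDeltaL} and the two kernel hypotheses, the isomorphism via the analogue of Proposition~\ref{DeltaLiso}, and the gauge step yielding $\tfrac12(\Delta_H-4\Ric_g)\omega=0$ at $h=0$ are all exactly what the paper does. Your closing remark about the $(g+h)g^{-1}$ factors producing only lower-order perturbations is in fact a point the paper glosses over (it writes the exact identity $B_{g+h}\mathcal L_g\omega-\Ric_g\omega=0$ directly), so your version is if anything slightly more careful there.
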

\begin{proof}
On a encore $\overline F(0,0)=0$ et comme $g$ est d'Einstein, par les hypothèses sur les noyaux et en utilisant les formules
(\ref{divDeltaL}) et (\ref{TrDeltaL}) on trouve que les éléments du noyau de $\Delta_L-4\Ric$ sont à divergence nulle
et trace nulle (dit aussi TT-tenseurs), en particulier $B_g\circ\overline\Pi=0$. On trouve ainsi
$$
D_h\overline F(0,0)=\frac12\Delta_L-2\Ric+\frac12\overline\Pi.
$$
L'analogue de la proposition (\ref{DeltaLiso}) prouve que cet opérateur est un isomorphisme de $C^{k+2,\alpha}(M,\mathcal S_2)$ dans $C^{k,\alpha}(M,\mathcal S_2)$.
Par le théorème des fonctions implicites, pour tout $\overline r\in C^{k+2,\alpha}(M,\mathcal S^2)$ petit, il existe 
$h$ proche de zéro dans 
$C^{k+2,\alpha}(M,\mathcal S_2)$ tel que
$$
\overline F(h,\overline r)=0.
$$
On  
applique ensuite $\overline B_{g+h}$ à l'équation \eq{equaelliptiquecontra}, on obtient
$$
\overline P_{g+h}\omega:=B_{g+h}\mathcal L_g\omega-\Ric_g \omega=0
$$
o\`u 
$$
\omega=\Ric_g^{-1}\overline B_{g+h}(\overline R(h,\overline r)).
$$
Or par hypothèse,
$$
\overline P_g=\frac12(\Delta-\Ric_g)-\Ric_g=\frac12(\Delta_H-4\Ric_g)
$$
est injectif, ainsi si $h$ est assez petit $\overline P_{g+h}$ le reste, donc $\omega=0$. 
\end{proof}
\begin{exemple}
Une variété compacte   d'Einstein à courbure scalaire strictement
négative, par exemple normalisée par
$$
\Ric(g)=-g.
$$
satisfait les hypothèses. 
Si l'on veut en savoir un peu plus sur la positivité et la projection,
remarquons tout d'abord que 
$$\Delta_L-4\Ric=\Delta-2(\Ric+\Riem).$$
Ainsi par le lemme de Fujitani (\cite{Besse} p 356), si en plus la courbure sectionnelle est négative (ou nulle), on a $\Riem\leq1$ et comme  $\Ric=-1$, on en déduit que $\Delta_L-4\Ric\geq\Delta\geq0$. 
Dans ce cas on a en particulier $\ker(\Delta_L-4\Ric)\subset\ker\nabla$ mais comme $\ker\nabla\subset\ker\Delta_L$ (rappelons la formule (\ref{lichneparal})) on trouve
 $\ker(\Delta_L-4\Ric)=\{0\}$ et l'inversion a lieu sans projection sur le noyau.
\end{exemple}

\section{Autres opérateurs de courbure}
Nous montrons ici que la méthode de la section \ref{sec:ppal} peut aussi \^etre adaptée à d'autres opérateurs, affines en la courbure de Ricci. Pour $\kappa$ et $\Lambda$ deux constantes réelles, on définit le  tenseur
$$
\Ein(g):=\Ric(g)+\kappa R(g)g+\Lambda g.
$$
Ainsi par exemple lorsque  $\kappa=-\frac12$ on retrouve le tenseur d'Einstein (avec constante cosmologique $\Lambda$), et si $\kappa=-\frac1{2(n-1)}$ et $\Lambda=0$ le 
tenseur de Schouten.
On étudie l'inversion de l'opérateur $\Ein$. On se donne donc $E$ un 2-tenseur symétrique et l'on cherche $g$ telle que
\bel{mainequationE}
\Ein(g)=E.
\ee
Comme nous avons
$$
\Tr_g\Ein(g)=(1+n\kappa)R(g)+n\Lambda,
$$
l'équation (\ref{mainequationE}) est équivalente à
$$
\Ric(g)=E-\frac{\kappa\Tr_g  E+\Lambda}{1+n\kappa}g
$$
Pour $E$ quelconque, on définit
$$
\mathcal B_g(E)=\div_gE+\frac{2\kappa+1}{2(1+\kappa n)}d\Tr_gE=B_g(E)-\frac{(n-2)\kappa}{2(1+\kappa n)}d\Tr_gE,
$$
de telle sorte que l'identité de Bianchi se traduise ici par
$$
\mathcal B_g(Ein(g))=0.
$$
Connaissant déjà la différentielle de $B_g(E)$ relativement à la métrique (voir \cite{Delay:study} par exemple), 
on trouve que la différentielle de cet opérateur relativement à la métrique est 
$$
D[\mathcal B_{(.)}(E)](g)h=-EB_g(h)+\frac{(n-2)\kappa}{2(1+\kappa n)}d\langle E,h\rangle
+T(E,h),$$
où $E$ est identifié ici à l'endomorphisme de $T^*M$ correspondant et
$$
T(E,h)_j=\frac12(\nabla_kE_{jl}+\nabla_lE_{kj}-\nabla_jE_{kl})h^{kl}.
$$
Par analogie avec la section \ref{sec:ppal}, on définit
$$
\mathcal F(h,e):=\Ric(g+h)-E+\frac{\kappa\Tr_{g+h}E+\Lambda}{1+\kappa n}{(g+h)}-\mathcal L_{g}\Ein_g^{-1}\mathcal B_{g+h}(E),
$$
où $\Ein_g$ est l'endomorphisme de $T^*M$ associé à $\Ein(g)$, $$E=\Ein(g)+e-\frac12\widetilde\Pi(h),$$ et $\widetilde \Pi$ une projection $L^2$ sur un espace de dimension fini
à préciser ultérieurement.
On a déjà
$$
\mathcal F(0,0)=0.
$$
Si la courbure de Ricci est parallèle, on a $\nabla\Ein(g)=0$  et  si l'on suppose $\mathcal B_g\circ\widetilde\Pi=0$,
on obtient
$$
D_h\mathcal F(0,0)h=$$
$$
\frac12\Delta_Lh+\frac12\widetilde\Pi(h)+\frac{1}{1+\kappa n}
\left(\kappa\Tr_g\Ein(g) \;h+\Lambda h-\kappa\langle \Ein(g),h\rangle g-\frac12\kappa\Tr_g\widetilde\Pi(h)\;g\right)
$$
$$
-\frac{(n-2)\kappa}{2(1+\kappa n)}\mathcal L_{g}\Ein_g^{-1}d\langle \Ein(g),h\rangle
$$

Ainsi si $g$ est d'Einstein $\Ein(g)=\tau g$ ou si $\kappa=0$, on a
$$
D_h\mathcal F(0,0)h=$$
$$
\frac12\Delta_Lh+\frac12\widetilde\Pi(h)+\frac{1}{1+\kappa n}
\left(n\kappa\tau \;h+\Lambda h-\kappa\tau\Tr_gh\;g-\frac12\kappa\Tr_g\widetilde\Pi(h)\;g\right)
$$
$$
-\frac{(n-2)\kappa}{2(1+\kappa n)}\nabla\nabla\Tr_gh.
$$
Cette différentielle nous incite à définir l'opérateur
$$
\begin{array}{lll}
\mathcal P h&:=&\Delta_Lh+\frac{2(n\kappa\tau+\Lambda)}{1+kn}h
+\frac{\kappa}{n(1+\kappa n)}\Big({(n-2)}\Delta \Tr_gh-2 n\tau\Tr_gh\Big)\;g\\
&=&(\Delta_L+{2\kappa R(g)+2\Lambda})h
+\frac{\kappa}{n(1+\kappa n)}\Big({(n-2)}\Delta \Tr_gh-2 n\tau\Tr_gh\Big)\;g.\\
\end{array}
$$
 Ce dernier respecte le scindage $\mathcal S_2=\mathcal G\oplus\mathring {\mathcal S}_2$. En particulier si
$u$ est une fonction sur $M$ et $\mathring h$ un champ de 2-tenseurs symétrique sans trace, on a
$$
\mathcal P(ug+\mathring h)=\frac{1}{1+\kappa n}p(u)g+\mathring P(\mathring h),
$$
o\`u
$$
p(u)=(1+2(n-1)\kappa)\Delta u+2\Lambda u,
$$
et 
$$
\mathring P(\mathring h)=\left[\Delta_L+{2\kappa R(g)+2\Lambda}\right]\mathring h.
$$

Pour $u$ une fonction sur $M$ et $\mathring h$ un champ de 2-tenseurs symétrique sans trace, on définit
$$\widetilde\Pi(ug+\mathring h):=\pi(u)g+\mathring\Pi(\mathring h),$$
o\`u $\pi$ est la projection $L^2$ sur noyau de $p$, et 
$\mathring\Pi$  la projection $L^2$ sur noyau de $\mathring P$.
Ainsi si  $h=ug$ on trouve 
 $$D_h\mathcal F(0,0)(ug)=
\frac1{2}[p(u) +\pi(u)]g-\frac{(n-2)n\kappa}{2(1+\kappa n)}\mathring\Hess \;u,
$$
o\`u $\mathring\Hess \;u$ est la partie sans trace de la hessienne de $u$.
Si $h=\mathring h$ est sans trace, on trouve
 $$
D_h\mathcal F(0,0)(\mathring h)
=\frac12\left(\mathring P+ \mathring\Pi\right)\mathring h.
$$
\begin{theorem}\label{theoinvEin}
Soient  $k\in\N\backslash\{0\}$, $\alpha\in(0,1)$, $\kappa\neq -\frac1n,-\frac1{2(n-1)}$ et  $\Lambda\in\R$. Soit $g$ une métrique Ricci parallèle si $\kappa=0$ et d'Einstein sinon, telle que $\Ein(g)$
est non dégénéré. On suppose que le noyau de $p$ est  trivial ou réduit aux constantes, 
et  que $\ker (\Delta_H+2\kappa R(g)+2\Lambda)=\{0\}$.  Alors pour tout $ e\in C^{k+2,\alpha}(M,\mathcal S_2)$ petit,  il existe un unique $h$ proche de zéro dans 
$C^{k+2,\alpha}(M,\mathcal S_2)$ telle que
$$
 \Ein(g+h)=\Ein(g)+e-\frac12\widetilde\Pi(h),
$$
De plus l'application $ e\mapsto h$ est lisse au voisinage de zéro entre les Banach correspondants. 
\end{theorem}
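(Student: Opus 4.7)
Le plan consiste à appliquer le théorème des fonctions implicites à l'application $\mathcal F$ de l'énoncé au voisinage de $(0,0)$, puisqu'on a déjà $\mathcal F(0,0)=0$ par construction et que $\mathcal F$ est lisse entre les espaces de Hölder $C^{k+2,\alpha}(M,\mathcal S_2)\times C^{k+2,\alpha}(M,\mathcal S_2)\to C^{k,\alpha}(M,\mathcal S_2)$. Deux étapes sont à traiter~: montrer que la différentielle $D_h\mathcal F(0,0)$ est un isomorphisme, puis éliminer le terme de jauge pour déduire l'équation souhaitée.

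Pour justifier le calcul de $D_h\mathcal F(0,0)$ donné avant l'énoncé, je vérifie d'abord l'identité implicitement utilisée $\mathcal B_g\circ\widetilde\Pi=0$. Tout $\mathring h\in\ker\mathring P$ est sans trace par construction~; en appliquant $\div_g$ à $\mathring P\mathring h=0$ et en invoquant (\ref{divDeltaL}), on obtient $(\Delta_H+2\kappa R(g)+2\Lambda)(\div_g\mathring h)=0$, donc $\div_g\mathring h=0$ par l'hypothèse sur ce noyau, d'où $\mathcal B_g(\mathring h)=0$. Quant à $\pi$, l'hypothèse entraîne que $\ker p$ est soit trivial, soit réduit aux constantes (ce dernier cas n'étant possible que si $\Lambda=0$), donc $\pi(u)g$ est un multiple constant de $g$ et $\mathcal B_g(cg)=0$ est immédiat. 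La formule pour $D_h\mathcal F(0,0)$ montre ensuite que cet opérateur est triangulaire inférieur pour le scindage $\mathcal S_2=\mathcal G\oplus\mathring{\mathcal S}_2$~: la composante conforme $ug$ est envoyée en $\frac{1}{2(1+\kappa n)}(p(u)+\pi(u))g\in\mathcal G$ avec un terme croisé $-\frac{n(n-2)\kappa}{2(1+\kappa n)}\mathring\Hess u\in\mathring{\mathcal S}_2$, tandis que la composante sans trace $\mathring h$ reste dans $\mathring{\mathcal S}_2$ et y est envoyée par $\frac12(\mathring P+\mathring\Pi)$. L'isomorphie se réduit donc à celle des deux blocs diagonaux~: $p+\pi$ sur les fonctions est un isomorphisme des espaces de Hölder par l'hypothèse sur $\ker p$ (et grâce à $\kappa\neq-\frac1{2(n-1)}$ qui assure l'ellipticité de $p$)~; $\mathring P+\mathring\Pi$ sur $\mathring{\mathcal S}_2$ est traité par la même démonstration que la proposition \ref{DeltaLiso}, l'opérateur $\mathring P=\Delta_L+2\kappa R(g)+2\Lambda$ étant autoadjoint elliptique et préservant $\mathring{\mathcal S}_2$ grâce à (\ref{TrDeltaL}).

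Le théorème des fonctions implicites fournit alors, pour tout $e$ petit, un unique $h$ proche de zéro avec $\mathcal F(h,e)=0$ et dépendance lisse en $e$. Pour conclure, je reprends la stratégie de la proposition \ref{sol}~: on applique $\mathcal B_{g+h}$ à $\mathcal F(h,e)=0$, en utilisant l'identité de Bianchi $\mathcal B_{g+h}(\Ein(g+h))=0$ et la relation entre $\Ric(g+h)$ et $\Ein(g+h)$ pour réécrire d'abord l'équation sous la forme $\Ein(g+h)=E+\mathcal L_g\omega+\kappa(\Tr_{g+h}\mathcal L_g\omega)(g+h)$, avec $\omega:=\Ein_g^{-1}\mathcal B_{g+h}(E)$. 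En prenant la $\mathcal B_{g+h}$-divergence, on trouve que $\omega$ annule
$$
\mathcal P_{g+h}\omega:=\mathcal B_{g+h}\mathcal L_g\omega+\Ein_g\omega+\frac{\kappa(n-2)}{2(1+\kappa n)}d\,\Tr_{g+h}\mathcal L_g\omega.
$$
Un calcul à $h=0$ utilisant $2\div_g\mathcal L_g=\Delta+dd^*-2\Ric_g$ donne $2\mathcal P_g=\Delta_H+2\kappa R(g)+2\Lambda$, injectif par hypothèse. L'opérateur $\mathcal P_{g+h}$ reste donc injectif pour $h$ petit, ce qui impose $\omega=0$, $\mathcal L_g\omega=0$, et finalement $\Ein(g+h)=E=\Ein(g)+e-\frac12\widetilde\Pi(h)$.

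L'étape la plus délicate est cette dernière vérification que $2\mathcal P_g=\Delta_H+2\kappa R(g)+2\Lambda$~: c'est précisément le choix du coefficient $\frac{2\kappa+1}{2(1+\kappa n)}$ dans la définition de $\mathcal B_g$ qui annule les termes parasites en $dd^*\omega$ provenant à la fois de $\div_g\mathcal L_g\omega$ et du terme correctif additionnel en $\Tr_{g+h}\mathcal L_g\omega$.
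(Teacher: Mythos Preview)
Votre démonstration est correcte et suit essentiellement la même stratégie que l'article : fonctions implicites pour $\mathcal F$ via l'isomorphie triangulaire de $D_h\mathcal F(0,0)$, puis élimination de la jauge par injectivité d'un opérateur $P_g=\frac12(\Delta_H+2\kappa R(g)+2\Lambda)$ sur les $1$-formes. La seule différence, mineure, est que l'article applique directement $B_{g+h}$ à $\mathcal F(h,e)=0$ (les trois premiers termes se combinant en $-\mathcal B_{g+h}(E)$ sans réécriture), alors que vous passez d'abord à la forme $\Ein(g+h)=E+\mathcal L_g\omega+\kappa(\Tr_{g+h}\mathcal L_g\omega)(g+h)$ puis appliquez $\mathcal B_{g+h}$, ce qui introduit un terme supplémentaire en $dd^*\omega$ qui se trouve finalement compensé ; la route de l'article est légèrement plus courte mais les deux aboutissent au même opérateur.
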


\begin{proof}
Les hypothèses sur les noyaux  garantissent que les éléments du noyau de $\mathcal P$
sont de trace constante et, en utilisant (\ref{divDeltaL}), à divergence nulle. On a bien ainsi $$\mathcal B_g\circ\widetilde\Pi=0.$$
Les analogues évident de la proposition \ref{DeltaLiso} prouvent que $p+\pi$ et $\mathring P+\mathring \Pi$
sont des isomorphismes de $C^{k+2,\alpha}$ dans $C^{k,\alpha}$ et donc que $D_h\mathcal F(0,0)$ aussi.
Les calculs qui précèdent et  le théorème des 
fonctions implicites impliquent alors que pour $ e\in C^{k+2,\alpha}(M,\mathcal S_2)$ petit,  il existe $h$ proche de zéro dans 
$C^{k+2,\alpha}(M,\mathcal S_2)$ tel que
$$
\mathcal F(h,e)=0.
$$
On applique maintenant $B_{g+h}$ à cette équation ainsi
$$
B_{g+h}\mathcal F(h,e)=-\mathcal B_{g+h}(E)-B_{g+h}\mathcal L_{g}\Ein_g^{-1}\mathcal B_{g+h}(E)=0.
$$
On pose $\omega=\Ein_g^{-1}\mathcal B_{g+h}(E)$ alors
$$
P_{g+h}\omega :=B_{g+h}\mathcal L_{g}\omega+\Ein_g\omega=0.
$$
Mais par hypothèse
$$
P_g=\frac12(\Delta-\Ric_g)+\Ein_g=\frac12(\Delta +\Ric_g+2\kappa R(g)+2\Lambda )=\frac12(\Delta_H +2\kappa R(g)+2\Lambda )
$$
est injectif, ainsi si $h$ est petit $P_{g+h}$ l'est encore donc $\omega=0$. 

\end{proof}

\begin{exemple}
Notons que quelque soit la courbure, si $\kappa>-1/2(n-1)$, quitte à prendre $\Lambda$ assez grand, tous les opérateurs seront strictement positifs
et l'inversion a lieu sans la projection.
Afin de donner un  exemple en courbure nulle, remarquons que pour  le tore plat, il suffit
de prendre $\Lambda>0$. Ce dernier exemple nous a poussé à étudier dans un autre article \cite{Delay:ricciAE}, une version asymptotiquement euclidienne
du théorème \ref{theoinvEin}.
\end{exemple}

\section{Image d'opérateurs de courbures de type Riemann-Christoffel}
Nous voudrions, tout comme dans \cite{Delay:etude} montrer que l'image de certain opérateurs de 
type Riemann-Christoffel, sont des sous variétés dans $C^\infty$, au voisinage de la  métrique $g$.
Nous cherchons donc tout d'abord un tenseur  $\mathcal Ein$ qui soit  4 fois covariant, ayant les m\^emes propriétés 
algébriques que le tenseur de Riemann et affine en la courbure, on 
pose donc 
$$
\mathcal Ein(g)=\Riem(g)+g {~\wedge \!\!\!\!\!\bigcirc ~} (a\Ric(g)+bR(g)g+cg), 
$$
o\`u ${~\wedge \!\!\!\!\!\bigcirc ~}$ est le produit de Kulkarni-Nomizu (\cite{Besse} p. 47).
Comme nous voulons que $\Tr_g\mathcal Ein(g)$ soit proportionnelle à $\Ein(g)$, cela nous impose 
 $$c=\frac{1+(n-2)a}{2(n-1)}\Lambda,\;\;b=\frac{\kappa[1+a(n-2)]-a}{2(n-1)}.$$
On a alors
$$
\Tr_g\mathcal Ein(g)=[a(n-2)+1]\Ein(g).
$$
Nous définirons la version de type Riemann-Christoffel de $\mathcal Ein(g)$ par
$$
[g^{-1}\mathcal Ein(g)]^i_{klm}:=g^{ij}\mathcal Ein(g)_{jklm}.
$$
Consid\'erons ${\mathcal R}^1_3$, le sous-espace de ${\mathcal T}^1_3$ des
tenseurs v\'erifiants
$$
\tau^i_{ilm}=0,\;\tau^i_{klm}=-\tau^i_{kml},\;
\tau^i_{klm}+\tau^i_{mkl}+\tau^i_{lmk}=0.
$$
On définit l'espace de Fréchet 
$$C^{\infty}=\cap_{k\in\N}C^{k,\alpha},$$
munit de la famille de semi-normes  $\{\|.\|_{k,\alpha}\}_{k\in\N}$.
On procède alors de façons similaire à \cite{Delay:etude} pour prouver que 
\begin{theorem}
Sous les conditions du théorème \ref{theoinvEin}, on suppose de plus que le noyau de $\mathcal P$ est trivial,
autrement dit  $\widetilde \Pi=0$. Alors l'image de l'application
$$
\begin{array}{lll}
C^{\infty}(\R^n,\mathcal S_2)&\longrightarrow&C^{\infty}(\R^n,\mathcal R_3^1)\\
h&\mapsto &(g+h)^{-1}\mathcal Ein(g+h)-(g)^{-1}\mathcal Ein(g)\\
\end{array}
$$
est une sous-variété lisse au voisinage de zéro.
\end{theorem}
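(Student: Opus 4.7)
Following the approach of \cite{Delay:etude}, we first prove the statement at each Hölder regularity $C^{k+2,\alpha}(M,\mathcal S_2) \to C^{k,\alpha}(M,\mathcal R^1_3)$, then pass to the $C^\infty$ setting by intersecting the resulting Banach submanifold charts over $k$. Compatibility of these charts across regularity levels is supplied by elliptic regularity for the DeTurck-gauged system from Theorem \ref{theoinvEin}, which ensures that the chart tangent spaces $V^k := T_0 \mathrm{image}(\Phi^k)$ are nested properly under the inclusions $C^{k+1,\alpha} \hookrightarrow C^{k,\alpha}$.

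The first key observation is that the intrinsic algebraic contraction $\sigma \mapsto \sigma^i{}_{kim}$ on $\mathcal R^1_3$-tensors, applied to $\Phi(h) := (g+h)^{-1}\mathcal{Ein}(g+h) - g^{-1}\mathcal{Ein}(g)$, yields $(a(n-2)+1)[\Ein(g+h)-\Ein(g)]$, where $a(n-2)+1 \neq 0$ under our hypotheses on $\kappa$ (since $\kappa \neq -1/(2(n-1))$ forces this constant to be nonzero after the prescribed choice of $a,b,c$). Theorem \ref{theoinvEin} with $\widetilde\Pi = 0$ then provides a smooth right inverse $e \mapsto h(e)$ for the Ein-trace operation, via the DeTurck gauge built into $\mathcal F$. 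This yields a canonical section $\Xi(e):=\Phi(h(e))$ whose left inverse is the algebraic Ricci trace divided by $a(n-2)+1$, exhibiting $\Phi$ as a subimmersion at zero: the differential $D\Phi(0)$ has closed range $V^k \subset C^{k,\alpha}(M,\mathcal R^1_3)$ admitting a closed topological complement (morally, the ``Weyl-type'' directions not realized by small metric variations).

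The plan is then to invoke the Banach-manifold rank theorem for $\Phi$ near $0$, thereby identifying $\mathrm{image}(\Phi)$ locally with a smooth Banach submanifold modelled on $V^k$. Concretely: combine the DeTurck gauge-fixed parametrization of metrics (giving constant kernel dimension for the linearization) with Fredholm stability of the gauge-fixed elliptic operator (giving constant range codimension), to obtain constant rank of $D\Phi(g+h)$ in a neighborhood of $g$. The rank theorem then supplies local charts in which $\Phi$ becomes a linear projection onto $V^k$, whose image is a smooth submanifold.

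The main obstacle is verifying the constant-rank condition for $D\Phi(g+h)$ throughout a neighborhood of $h=0$. Range-codimension stability follows from standard Fredholm perturbation theory applied to the gauge-fixed elliptic linearization, but kernel-dimension stability is more delicate: diffeomorphism freedom can, in principle, enlarge the kernel along specific directions, so one must check that the DeTurck slice remains transverse to all infinitesimal diffeomorphism orbits under small perturbations. This is precisely where the hypotheses $\widetilde\Pi = 0$ and $\ker \mathcal P = \{0\}$ are fully exploited, precluding spurious degenerate directions thanks to the Ricci-parallel structure of $g$. This step is the direct analogue, in our setting, of the constant-curvature analysis carried out in \cite{Delay:etude}.
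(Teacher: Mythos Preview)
Your plan is aligned with the paper's own argument, which is itself just a reference to \cite{Delay:etude}; the ingredients you isolate (the algebraic Ricci-type contraction on $\mathcal R^1_3$ recovering $\Ein$, the smooth right inverse $e\mapsto h(e)$ supplied by Theorem \ref{theoinvEin} under the assumption $\widetilde\Pi=0$, and then a subimmersion/constant-rank argument carried out at each H\"older scale before intersecting to $C^\infty$) are precisely those of that earlier work, and you correctly flag the constant-rank verification as the point where the actual work lies.

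One small correction: the nonvanishing of $a(n-2)+1$ is not a consequence of the hypothesis $\kappa\neq -1/(2(n-1))$. The parameter $a$ entering the definition of $\mathcal Ein$ is independent of $\kappa$ (only $b$ and $c$ are determined by $a,\kappa,\Lambda$), and one must simply choose $a$ so that the trace constant $a(n-2)+1$ does not vanish; the paper's closing remark records that $a$ remains a free parameter.
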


\begin{remark}
Dans la définition de $\mathcal Ein$, le choix de $a\neq -1/2(n-1)$ est  encore libre.
Si nous  voulions retrouver la courbure de Riemann lorsque
$\kappa=\Lambda=0$ et, lorsque  $\kappa=-1/2$, un tenseur à divergence nulle (donc $a=-1$ et $b=1/4$ via l'identité de Bianchi 2). 
On pourrait choisir  par exemple  
$$
a=2\kappa\;,\;\;\;\; b=\frac{\kappa[2\kappa(n-2)-1]}{2(n-1)}\;,\;\;\;\;c=\frac{\Lambda[2\kappa(n-2)+1]}{2(n-1)}.
$$
Il n'est pas clair que ce choix soit plus naturel qu'un autre.
Peut être qu'une identité de type Bianchi 2 qui en découlerai serait aussi plus légitime mais
nous n'avons pas pu trancher à ce stade.

\end{remark}

%

\def\polhk#1{\setbox0=\hbox{#1}{\ooalign{\hidewidth
  \lower1.5ex\hbox{`}\hidewidth\crcr\unhbox0}}}
  \def\polhk#1{\setbox0=\hbox{#1}{\ooalign{\hidewidth
  \lower1.5ex\hbox{`}\hidewidth\crcr\unhbox0}}} \def\cprime{$'$}
  \def\cprime{$'$} \def\cprime{$'$} \def\cprime{$'$}
\providecommand{\bysame}{\leavevmode\hbox to3em{\hrulefill}\thinspace}
\providecommand{\MR}{\relax\ifhmode\unskip\space\fi MR }
\providecommand{\MRhref}[2]{%
  \href{http://www.ams.org/mathscinet-getitem?mr=#1}{#2}
}
\providecommand{\href}[2]{#2}

\end{document}